\newtheorem{thm}{Theorem} 
\newtheorem{lemma}{Lemma} 
\newtheorem{coro}{Corollary}
\let\paragraph\subsection
\title{Soft Barycentric Refinements}
\author{Oliver Knill}
\date{March 2, 2025}
\address{Department of Mathematics \\ Harvard University \\ Cambridge, MA, 02138 }
\subjclass{}
\keywords{Refinement}
\begin{document}
\maketitle

\begin{abstract}
The soft Barycentric refinement preserves manifolds with or without boundary.
In every dimension larger than one, there is a universal spectral 
central limiting measure that has affinities with the Barycentric 
limiting measure one dimension lower. Ricci type quantities like the length of the 
dual sphere of co-dimension-2 simplex stay invariant under soft 
refinements. We prove that the dual graphs of any manifold can be colored with 
3 colors, which is in the 2-dimensional case a special case of the Gr\"otzsch theorem. 
It follows that the vertices of a soft Barycentric refined $q$-manifold $G'$ 
can be colored by $q+1$ or $q+2$ colors. 
\end{abstract}

\section{Introduction}

\paragraph{}
A {\bf finite simple graph} $G=(V,E)$ defines a {\bf finite abstract simplicial 
complex} $V_1=\{ x=V(H), H \sim K_k \; H \; {\rm subgraph} \; of G\}$, a finite
set of non-empty sets closed under the operation of taking finite non-empty subsets.
This set of sets $V_1$ defines the {\bf Barycentric refinement graph}
$\psi(G)=G_1=(V_1,E_1)$ with vertex set $V_1$ and edge set 
$E_1=\{ (a,b) \in V_1 \times V_1, a \neq b, a \subset b | b \subset a \}$.
We define here a {\bf soft Barycentric refinement}. In manifolds without boundary
it disregards the co-dimension one simplices $W$ as vertices and adds edges between
maximal simplices which have an intersection in $W$. It has similar properties than
the Barycentric refinement but it does not increase vertex degrees in 2 dimensions.
We focus in this article on two subjects, 
{\bf 1)} the soft Barycentric limit and 
{\bf 2)} the chromatic number of softly refined manifolds. 
In both of these two stories, the dual $\hat{G}$ of a manifold $G$ plays 
an important role. The skeleton graph $\hat{G}$ has the facets of $G$ 
as points and connects two if they intersect in $W$. 
If the manifold $G$ is orientiable, it
can naturally be given a {\bf cell structure} making
a dual manifold of $G$ with reversed f-vector $(f_d, \dots, f_0)$ of $G$ 
and reversed Betti vector $(b_d, \dots, b_0)$ of $G$.

\begin{figure}[!htpb]
\scalebox{0.15}{\includegraphics{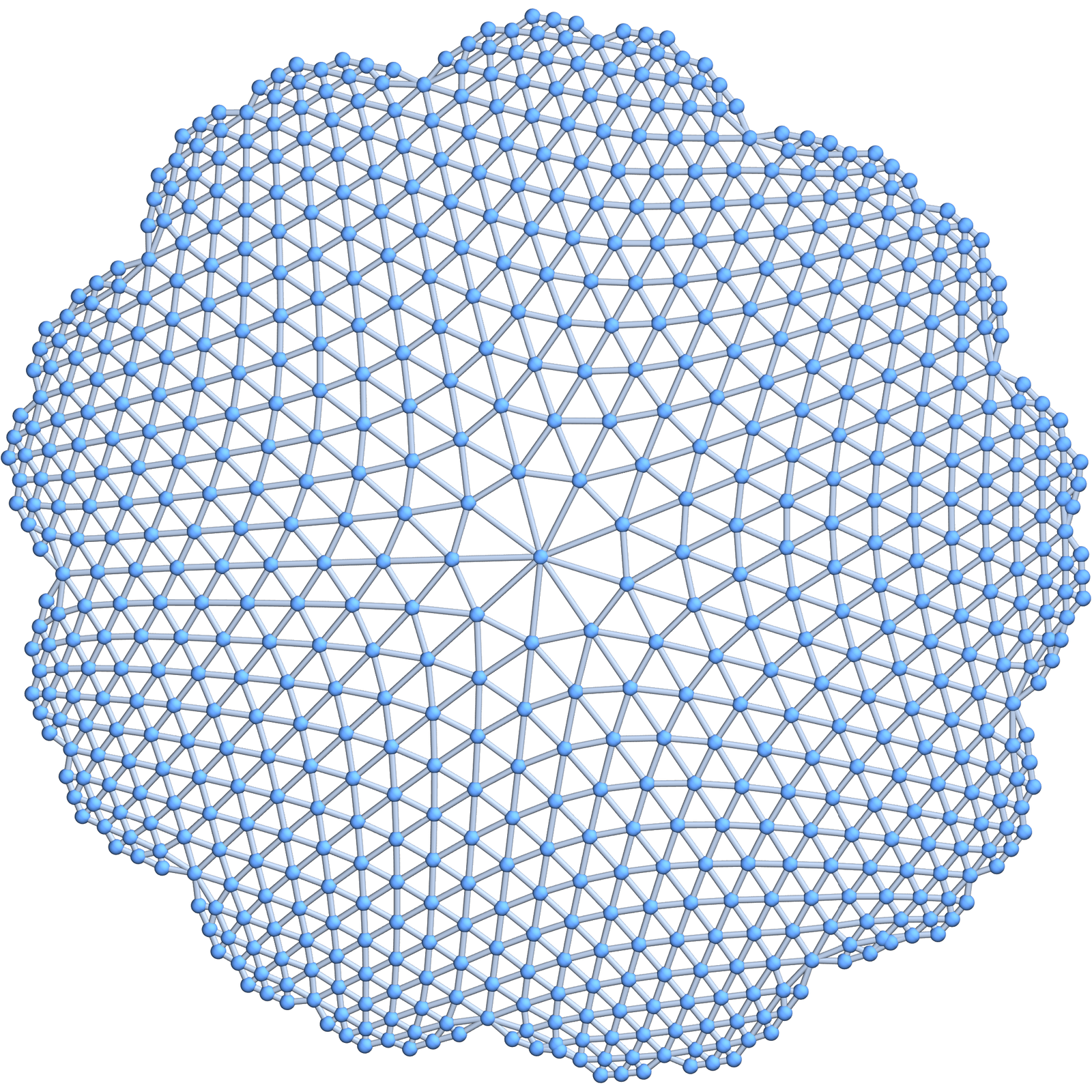}}
\scalebox{0.15}{\includegraphics{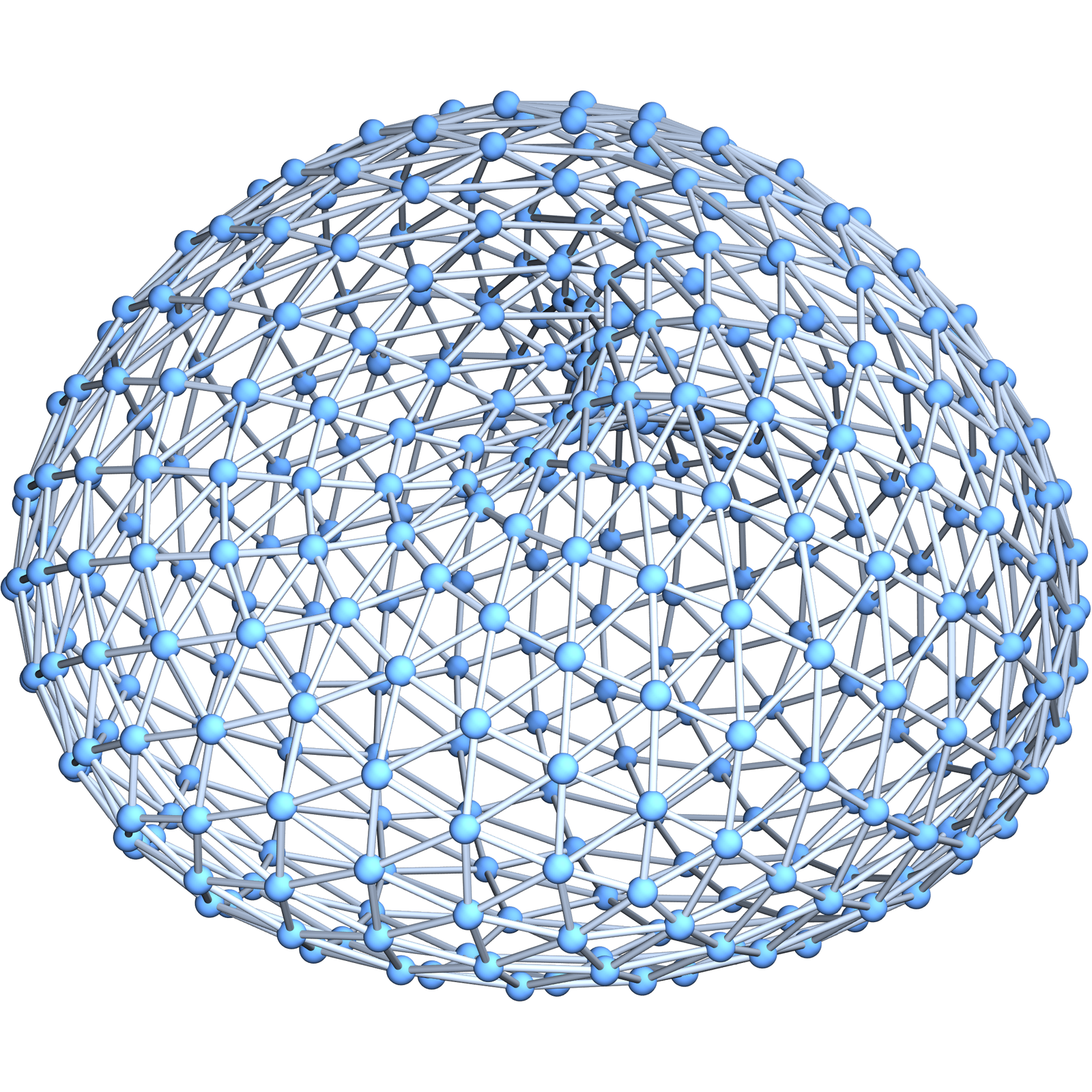}}
\label{Barycentric}
\caption{
We see to the left the 4'th soft Barycentric refinement of a wheel graph 
with chromatic number $c(G)=c(\phi^4(G))=4$ 
and the 3'rd softly refined projective plane $G$ 
with $c(\phi^3(G))=5$. No 2-manifold with $c(\phi(G))<c(G)$ is known.
Giving such an example would settle a conjecture of Albertson and Stromquist
\cite{AlbertsonStromquist} which says $c(G) \leq 5$ for 2-manifolds. 
The smallest manifold with $c(\phi(G))<c(G)$ we know of is the 
$q=5$-sphere $G=C_5 \oplus C_5 \oplus C_5$ (the graph join of three $1$-spheres) 
with $c(G)=3 c(C_5)=3*3=9$ for which our result shows $c(\phi(G))=q+2=7$.
}
\end{figure} 

\paragraph{}
Let us first summarize what we will prove about the {\bf chromatology} of softly 
refined manifolds $G$. The {\bf chromatic number} of $G$ is defined as the
chromatic number of the 1-skeleton complex of $G$. It is the minimal number
$c \geq 1$ for which there is a locally injective function $g:V(G) \to \{1,2,\dots, c\}$,
from the vertex list $V(G)$ of $G$ ($V(G)$ is the set of elements in $G$ with 
cardinality $1$)) to a finite set of colors,
where {\bf locally injective} means that $g(a) \neq g(b)$, 
whenever $(a,b)$ is in the edge set $E$, the sets of cardinality $2$ in $G$.
If $G$ has maximal dimension $q$, then $c \geq q+1$.
But it can be much larger in general. There are complexes with $q=1$ 
(equivalently triangle-free graphs),
for which the chromatic number can be arbitrarily large. 
We always have $c(G_1) = q+1$ for any Barycentric refined graph $G_1$ because 
$g(x)={\rm dim}(x)+1 \in \{1,2 \dots, q_1\}$ 
is an explicit coloring of the vertices $V_1$ of 
$G_1$ ($V_1$ are the simplices of $G$): given two simplices $a,b \in V_1$, 
the relation $a \subset b, a \neq b$ forces $g(a)<g(b)$. 
We will see that in the case of a soft refinement, we always have a dichotomy 
$c(G')=q+1$ or $c(G')=q+2$. 

\paragraph{}
Now to {\bf universality}: the spectrum of the Kirchhoff Laplacians $K(G_n)$ 
converges in law to a universal measure $d\mu_q$, which only depends on the 
maximal dimension $q$ of $G$ \cite{KnillBarycentric,KnillBarycentric2}.
In the case $q=1$, both in the Kirchoff as well as in the Hodge Laplacian case,
the limiting density of states is the {\bf arc-sin} distribution 
on $[0,4]$, which is the unique potential theoretic equilibrium measure on that set. 
In the case $d=2$, the limiting central limit measure already showed
Cantor-like feature. We so far only know that this measure exists for every $q$ and 
that it has finite support if and only if $q=1$.
We will note here that also soft Barycentric refinement limits exist. 
The motivation to look at limiting measures
is motivated also when comparing the relation between the number of trees and forests
in a graph, as we will see in the next paragraph. 

\begin{figure}[!htpb]
\scalebox{0.15}{\includegraphics{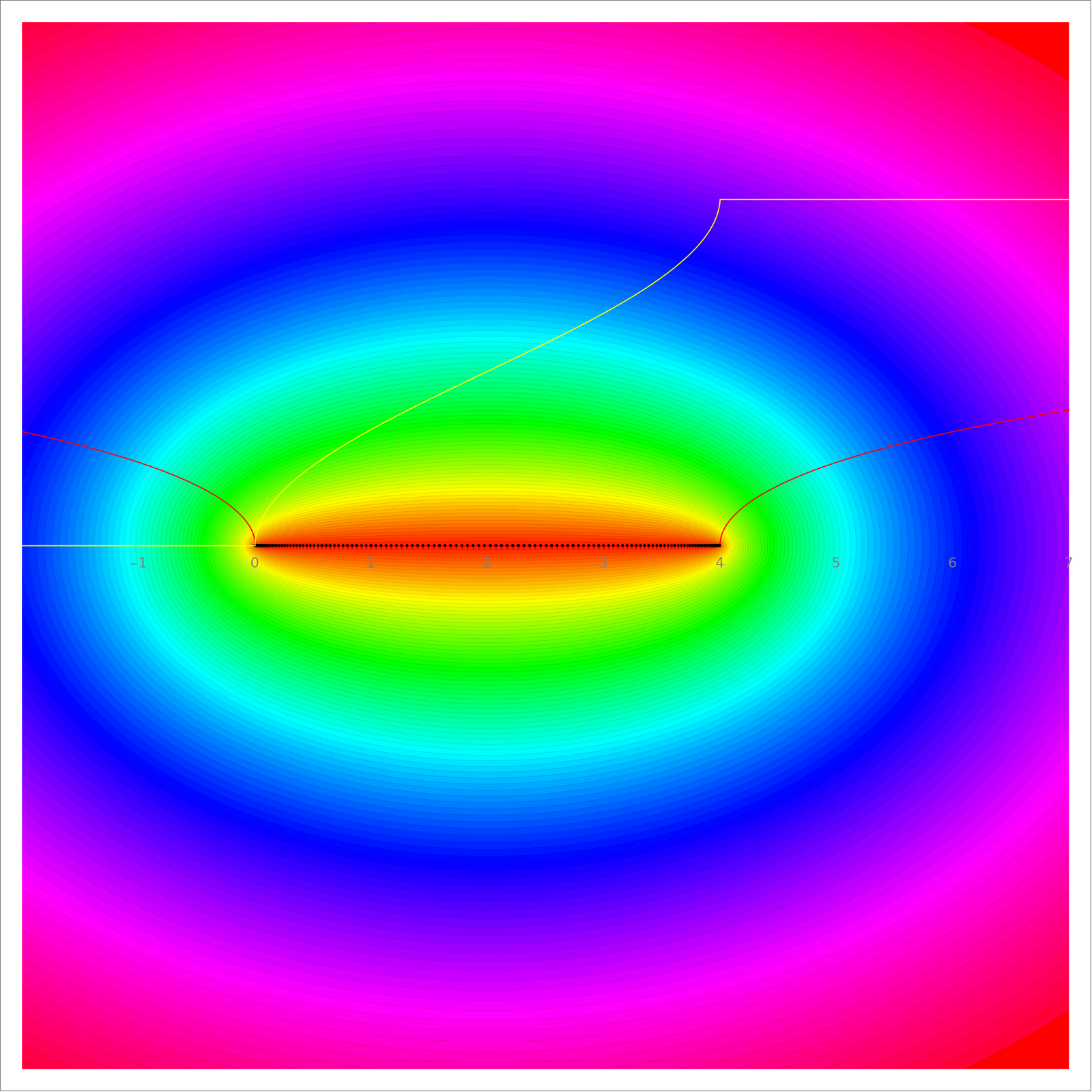}}
\scalebox{0.15}{\includegraphics{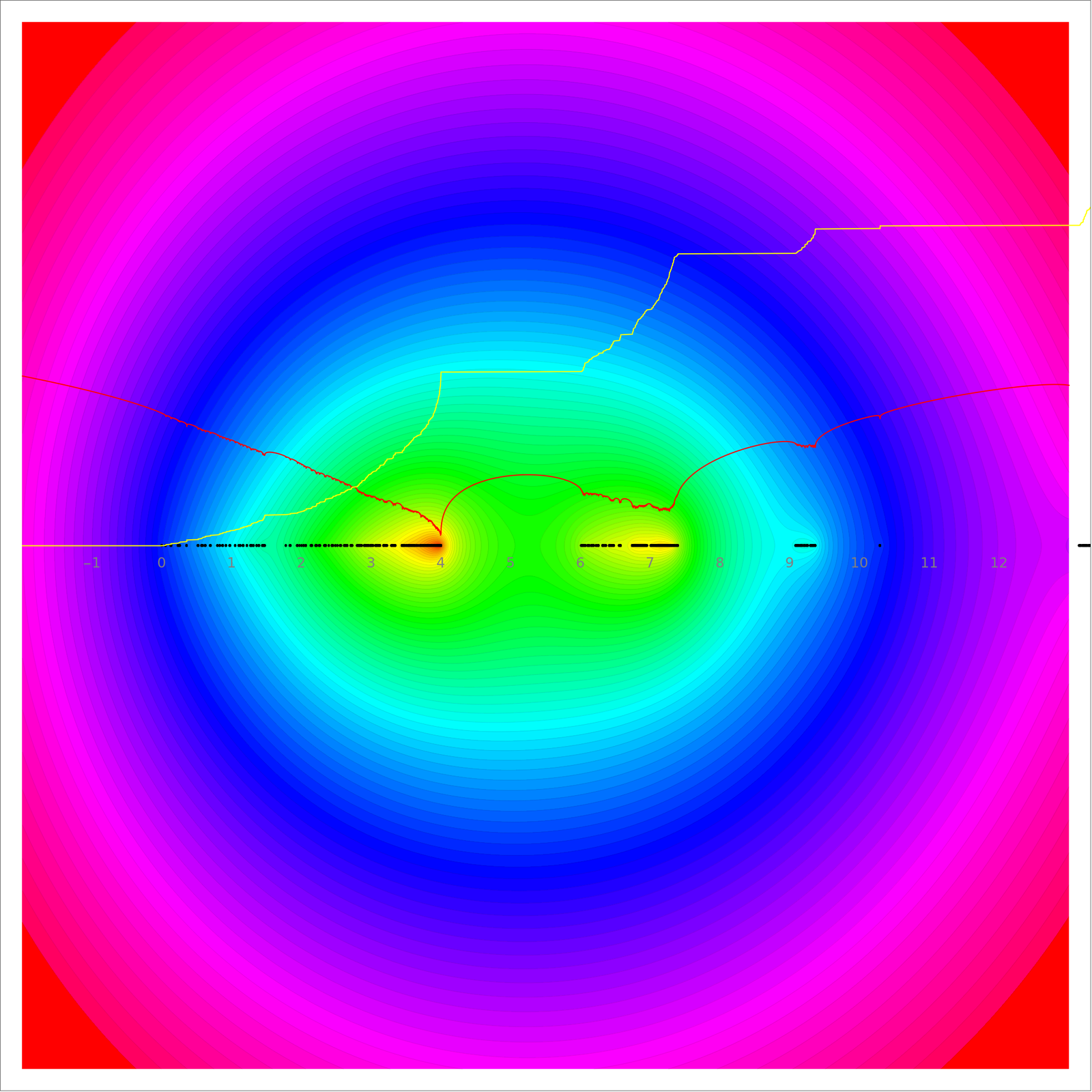}}
\label{Barycentric}
\caption{
We see approximations of the limiting measure for Barycentric 
refinement in dimension $q=1$ and $q=2$. These measures are supported on 
the real half line $[0,\infty) \subset \mathbb{R} \subset \mathbb{C}$. 
The existence of the limiting measures in any dimension $q$ is 
an older story \cite{KnillBarycentric,KnillBarycentric2}.
}
\end{figure}

\begin{figure}[!htpb]
\scalebox{0.15}{\includegraphics{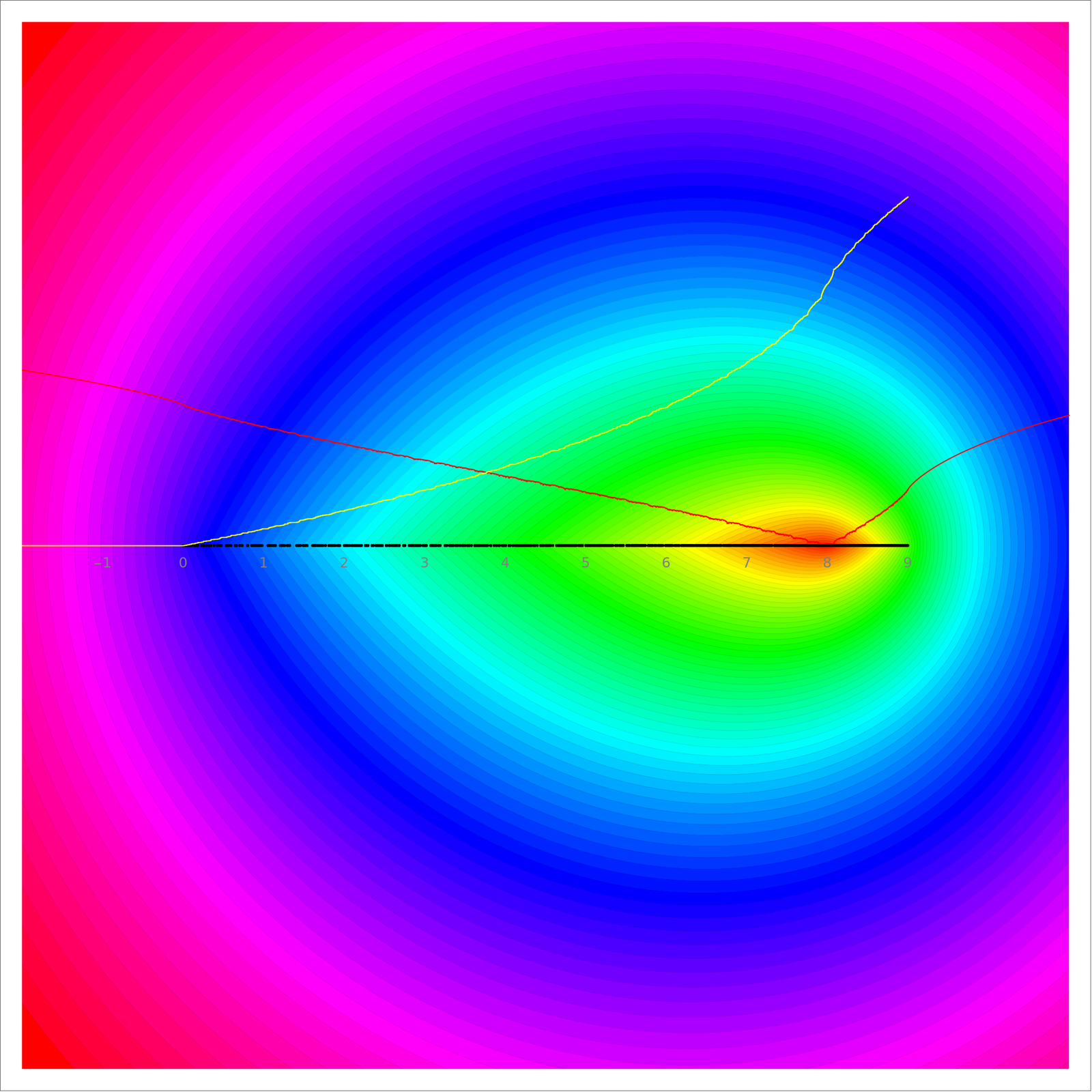}}
\scalebox{0.15}{\includegraphics{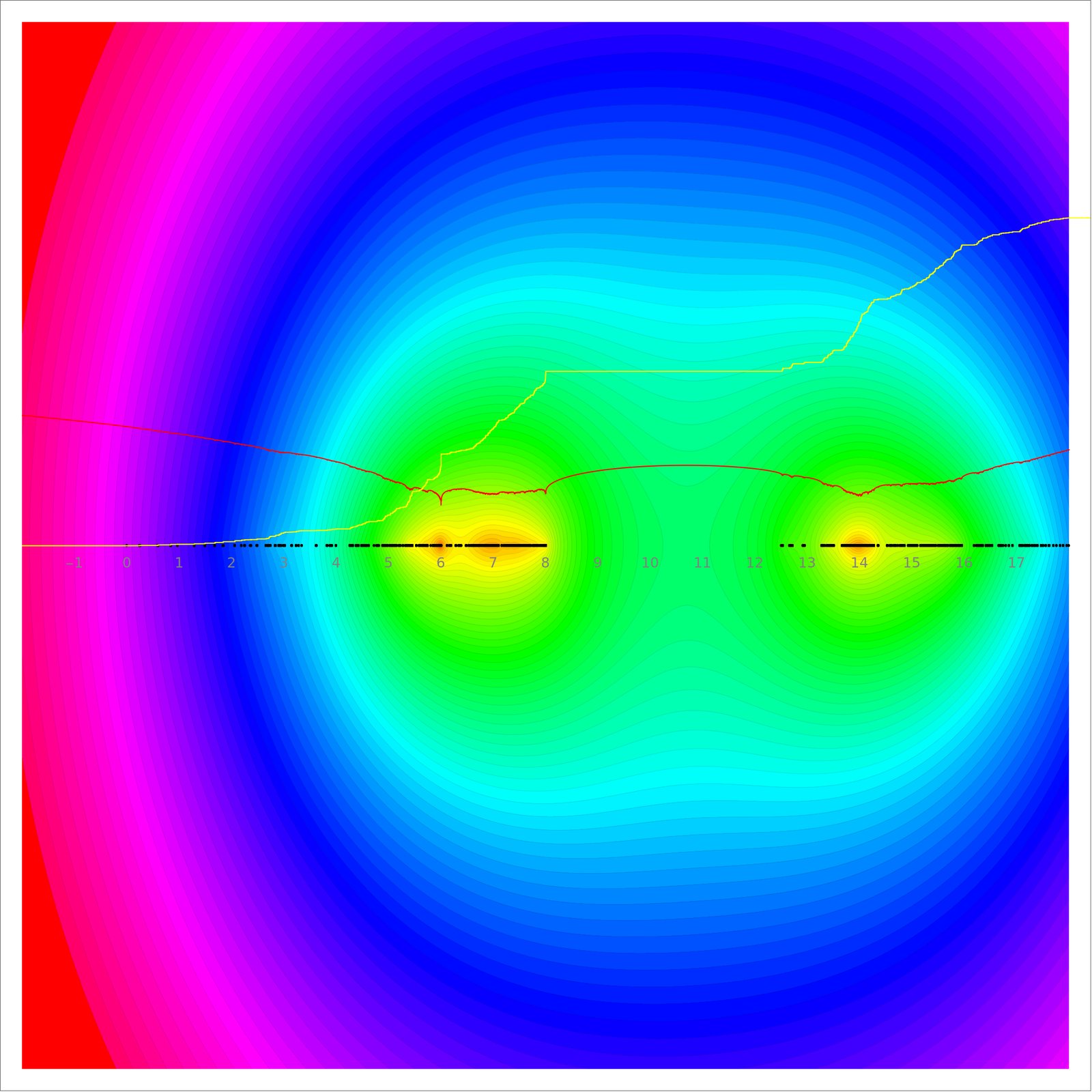}}
\label{Barycentric}
\caption{
We see approximations of the limiting measure for the soft 
Barycentric refinement limit in dimension $q=2$ and $q=3$.
This is new. For $q=2$, we have still an 
absolutely continuous measure of compact support. A Fourier transform allows to 
express the integrated density of states. It shows a 
{\bf van Hove singularity} \cite{VanHove1953}, 
which is related to critical points
of the Fourier transform $\hat{L}(x,y)=6-2\cos(x)-2\cos(y)-2\cos(x+y)$
on $L^2(\mathbb{T}^2)$, the Laplacian $L$ of the hex lattice. 
}
\end{figure}

\paragraph{}
If $\mu_q$ is either the soft or strong Barycentric limit measure in dimension $q$, 
then the {\bf potential} $f(z) = -\int_{\mathbb{C}} \log|z-z'| \; d\mu_q(z')$ 
is a complex-valued function that is analytic outside the 
support of the Barycentric limit measure $\mu_q$ in dimension $q$. 
The potential is of great interest in the Kirchhoff Laplacian case, as it encodes
exponential growth rates of trees and forests.

\paragraph{}
In the case of the Kirchhoff Laplacian, by the {\bf matrix tree theorem}, 
the {\bf tree index} $f(0)$ 
is the exponential growth rate of the number of rooted spanning trees in $G_n$ as 
$n \to \infty$. By the {\bf matrix forest theorem}, the forest index $f(-1)$ measures 
the exponential growth rate of the number of rooted spanning forests in 
$G_n$. If $\lambda$ are the eigenvalues of the Kirchhoff Laplacian $K$, then 
the {\bf pseudo determinant} ${\rm Det}(L)=\prod_{\lambda \neq 0} \lambda$ is the number of rooted 
spanning trees and ${\rm det}(1+K) = \prod_{\lambda} (1+\lambda)$ is the number of rooted 
spanning forests. The tree-forest index $\tau(G)={\rm det}(1+K)/{\rm det}(K)$
converges in the limit to a number that only depend on the size of the maximal clique in the graph.

\paragraph{}
The fact that the potential values $f(0)$ and $f(1)$ exist, follows from general 
spectral estimates $\lambda_k \leq 2d_k$, where 
$\lambda_k \leq \lambda_{k+1}$ are the eigenvalues and $d_k \leq d_{k+1}$ 
are the vertex degrees of the graph (see \cite{TreeForest}. The quest to estimate 
the potential $f(z)$ prompted the research for \cite{Knill2024}.

\paragraph{}
The {\bf facets} of $G$ are the $q$-simplices of $G$, where $q$ is the 
{\bf maximal dimension} of $G$. The {\bf boundary faces} of $G$ are the 
$(q-1)$ simplices of $G$ that are contained in precisely one facet of $G$
The other $(q-1)$-simplices are either {\bf interior faces}, the intersection of two
facets, or {\bf singular faces}, the intersection of three or more facets. 

\paragraph{}
{\bf Definition.} The {\bf soft Whitney complex} of a graph $G$ 
is the set of sub-simplices which consist of either 
$k$-simplices in $G$ for $k \neq q-1$, or then of {\bf boundary facets}, meaning 
$(q-1)$ simplices at the boundary. A boundary facet is a 
$(q-1)$ simplex which is part of exactly one $q$-simplex.
We can get to the Barycentric refinement from the soft Barycentric refinement
by edge refining all the edges connecting facets as such a subdivision.

\paragraph{}
The {\bf soft Barycentric refinement} $G_1=\phi(G)$ of a graph $G$ has 
as vertex set the elements in the {\bf soft Whitney complex} and 
connects two such points if one is contained in the other or if they 
intersect in an {\bf interior face}, a $(q-1)$ simplex which is contained 
in exactly two $q$-simplices. 

\paragraph{}
For $G=K_{q+1}$ with $q>0$ for example, the refinements $\phi^n(G)$ 
for $n>0$ are balls, meaning $q$-manifolds with $(q-1)$-spheres as boundary. 
For a manifold $G$, the growth rate of the $f$-vectors 
of the soft Barycentric refinements $G_n'$ is slower than the growth rate of the 
Barycentric refinements $G_n$. In dimension $2$, 
a second refinement $\phi^2$ agrees with the {\bf Loop refinement} 
\cite{Loop1978} defined by Loop in 1978 and the vertex degree stays bounded. 
For example, if $G$ is an icosahedron, then $G'=\phi(G)$ is 
a stellated dodecahedron. The vertex degree of $2$-dimensional manifolds does not grow.  
We have made the definitions in such a way that singular faces, 
$(q-1)$-dimensional faces which are the intersection of 3 or more
facets, do not enter in the refinement. 

\begin{figure}[!htpb]
\scalebox{0.75}{\includegraphics{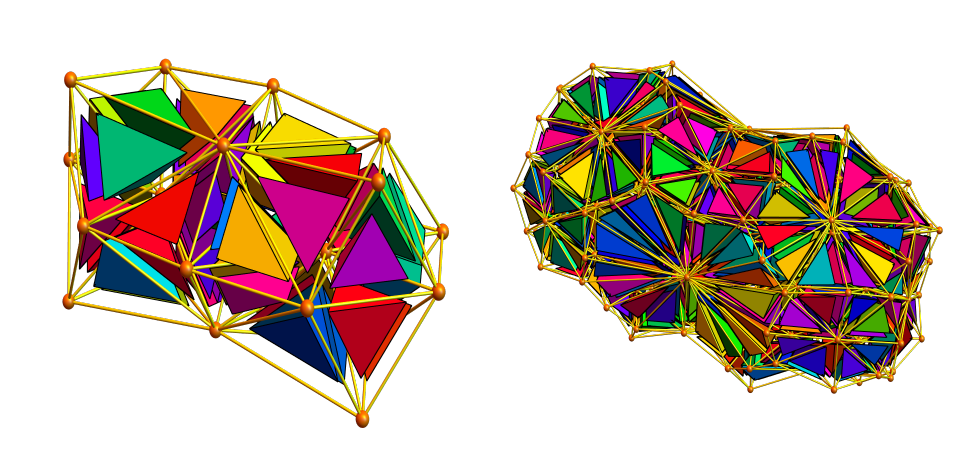}}
\label{manifold with boundary}
\caption{
The figure shows a 3-manifold with boundary and its first soft 
Barycentric refinement. Our definition of soft Barycentric 
refinement features that the boundary undergoes the 
usual Barycentric refinement. 
}
\end{figure} 

\paragraph{}
A finite simple graph is called a {\bf $q$-manifold}, 
if for every vertex $v \in V$, the unit sphere 
$S(v)$, the subgraph generated by all neighbors of $v$, is a 
$(q-1)$-sphere.  A $q$-manifold $G$ is called a {\bf $q$-sphere}, 
if there is a vertex $v$ such that $G \setminus v$ is contractible. 
A graph $G$ is called {\bf contractible}, if there
exists a vertex $v$ such that both $S(v)$ and $G \setminus v$ are both contractible.
These inductive definitions start by declaring $1=K_1$ to be contractible and 
that the empty graph $0$ to be a $(-1)$-sphere. 
A punctured $q$-sphere $G \setminus v$ if $G$ is a $d$-sphere is called a 
{\bf $q$-ball}. A {\bf $q$-manifold with boundary} is a graph such that all
unit spheres are either $(q-1)$-spheres or $(q-1)$-balls. Figure~1 illustrates the 
following lemma in dimension 2. A wheel graph is a 2-ball and 
a projective plane is a 2-manifold, meaning a 2-manifold without boundary:

\begin{lemma}
Soft Barycentric refinement preserves manifolds with and without boundary.
\end{lemma}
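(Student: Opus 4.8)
The plan is to prove, by induction on the dimension $q$, the stronger statement that $\phi$ sends $q$-spheres to $q$-spheres and $q$-balls to $q$-balls; the lemma as stated then drops out, since being a $q$-manifold (with or without boundary) is the local condition that every unit sphere is a $(q-1)$-sphere at an interior vertex and a $(q-1)$-ball at a boundary vertex. The base cases $q \le 1$ are checked by hand: $\phi$ fixes a cycle $C_n$ (it reappears as the cycle of its edges) and turns a path $P_n$ into a longer path $v_1 - e_1 - \cdots - e_{n-1} - v_n$, so $1$-spheres and $1$-balls are preserved.

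The heart of the argument is a local computation of the unit sphere $S_{\phi(G)}(x)$ for each vertex $x$ of $\phi(G)$, i.e.\ each simplex $x$ of $G$ that survives into the soft Whitney complex. I would split according to whether $x$ is a facet. For a non-facet $x$ of dimension $k\le q-1$, the neighbours split into the faces $f\subsetneq x$ (``below'') and the cofaces $g\supsetneq x$ (``above''), with no lateral neighbours, because a shared interior $(q-1)$-face would have to lie inside $x$ and hence have dimension $\le k<q-1$. Since every below-vertex is contained in every above-vertex, $S_{\phi(G)}(x)$ is the graph join $\partial_B x \star (\mathrm{above})$, where $\partial_B x$ is the barycentric boundary sphere $S^{k-1}$ of $x$. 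The key bookkeeping step is that the above-part is precisely $\phi(\mathrm{lk}_G x)$: a $(q-1)$-coface of $x$ is interior in $G$ exactly when the corresponding codimension-$1$ face of $\mathrm{lk}_G x$ is interior, and two facet-cofaces meet in an interior $(q-1)$-face through $x$ exactly when their images in the link do. By induction $\phi(\mathrm{lk}_G x)$ is a sphere or ball of dimension $q-k-1$ according to whether $x$ is interior or boundary, and since the join of two spheres is a sphere and the join of a sphere with a ball is a ball, $S_{\phi(G)}(x)$ is $S^{q-1}$ or $B^{q-1}$ as required.

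The facet case is special and must be handled directly, since a facet $x$ (dimension $q$) has no cofaces but acquires lateral neighbours: the adjacent facet $T_i$ across each interior $(q-1)$-face $w_i = x\setminus\{v_i\}$. Here I would exhibit an explicit graph isomorphism from $\partial_B x\cong S^{q-1}$ onto $S_{\phi(G)}(x)$ that is the identity on all faces of dimension $\le q-2$ and on the boundary $(q-1)$-faces, and sends each interior face $w_i$ to its adjacent facet $T_i$. One checks that $T_i$ has exactly the neighbours among the lower faces that $w_i$ had, namely the proper faces of $w_i$, that the $T_i$ are pairwise non-adjacent just as the $w_i$ are, and that no spurious edges appear; hence every facet is an interior vertex of $\phi(G)$ with spherical unit sphere. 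Along the way one sees that two boundary facets of $G$ never become adjacent in $\phi(G)$, so the boundary of $\phi(G)$ is the ordinary barycentric refinement of the boundary of $G$, matching the stated behaviour that the boundary undergoes the usual barycentric refinement.

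The main obstacle is the promotion from ``manifold'' to the combinatorial sphere/ball type needed to feed the induction: the unit-sphere computation alone shows only that $\phi(G)$ is a closed $q$-manifold (resp.\ a $q$-manifold with boundary), whereas to know it is a genuine $q$-sphere (resp.\ $q$-ball) one must control its global homotopy type. I would close this gap through the compatibility with the ordinary barycentric refinement recorded earlier: since $G_B$ is obtained from $\phi(G)$ by edge-refining the facet--facet edges, and $G_B$ is known to be a $q$-sphere or $q$-ball whenever $G$ is \cite{KnillBarycentric}, it suffices to argue that this subdivision move preserves the combinatorial sphere/ball type, equivalently that $\phi$ is a discrete homotopy equivalence. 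This homotopy-invariance step, rather than the unit-sphere bookkeeping, is where the real care is required.
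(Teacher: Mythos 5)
Your proposal is correct in outline but takes a genuinely different route from the paper. The paper's proof is entirely global: it observes that $\psi(G)$ is recovered from $\phi(G)$ by edge-refining every facet--facet edge (equivalently, $\phi(G)$ is obtained from $\psi(G)$ by collapsing the edges joining each interior $(q-1)$-simplex to the two facets containing it), and then inherits the manifold property from the known fact that $\psi$ preserves manifolds, together with the identity $\delta \phi(G)=\psi(\delta G)$ for the boundary. You instead verify the manifold condition locally, computing each unit sphere of $\phi(G)$ as a join $\partial_B x \star \phi(\mathrm{lk}_G x)$ for non-facets and as a relabelled copy of $\partial_B x$ for facets, and inducting on dimension. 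This buys a self-contained and more detailed verification than the paper offers --- in particular it \emph{proves} rather than asserts that the boundary of $\phi(G)$ is $\psi(\delta G)$, and the small checks you flag (e.g.\ that two adjacent facets $T_i,T_j$ of a common facet $x$ are never themselves adjacent, which follows because the link of the $(q-2)$-face $x\setminus\{v_i,v_j\}$ is a cycle of length at least $4$ and so cannot contain the triangle $v_i v_j u$) all go through. The cost is that your induction needs the stronger statement that $\phi$ preserves the combinatorial sphere/ball type of links, which drives you back to the same edge-refinement compatibility with $\psi$ that constitutes the paper's entire argument; so neither proof escapes the obligation to show that this subdivision move preserves combinatorial spheres and balls. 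You are explicit about that remaining step where the paper is silent, which is to your credit, but be aware that as written both arguments lean on it without proof.
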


\begin{proof}
(i) Let us look first at manifolds without boundary. \\
A $0$-dimensional graph is always a $0$-dimensional manifold and a fixed point of $\phi$. 
In one dimensions, a connected manifold is $C_n$ with $n \geq 4$. 
It is also a fixed point of $\phi$. 
Let now $G$ be a $d$-manifold for $d \geq 2$. If in the graph $G_1$ every edge $e$ which 
came from connecting two $d$-facets is {\bf edge refined}, (meaning that a new vertex
is introduced for $e=(a,b)$ and connected to all $S(a) \cap S(b)$), 
we get the Barycentric refinement of $G$. 
In other words, the weakly refined graph $\phi(G)$ can be obtained from the 
(strong) Barycentric refinement $\psi(G)$ by edge collapses of $(q-1)$ and $q$ simplices. 
(ii) For a manifold $G$ with boundary $\delta G$, we again can just see 
$\phi(G)$ can be obtained from $\psi(G)$ by edge collapses of interior edges 
connecting a $q-1$ simplex with a $q$ simplex. 
The proof in (i) shows that the unit spheres in the interior are $(q-1)$ spheres. 
From $\delta \phi(G) = \psi \delta(G)$ and the fact that the
Barycentric refinement operation $\psi$ preserves the class of  manifolds
the boundary is a $(q-1)$- manifold.
\end{proof} 

\paragraph{}
The eigenvalues $\lambda_j$ (ordered in increasing order $\lambda_j \leq \lambda_{j+1}$ )
of the Kirchhoff Laplacian give rise to 
a {\bf spectral function} $F_G(x)=\lambda_{[n x]}$, which is encoding the eigenvalues
$0=\lambda_0 \leq \dots \leq \lambda_n$. The function $F$ is piecewise constant, 
monotone and $F_G(0)=\lambda_0$ and $F_G(1)=\lambda_n$.
The {\bf integrated density of states} $F^{-1}$ is a monotone $[0,1]$-valued 
function on $[0,\infty)$. Its derivative is in general only defined in a 
distributional sense and defines the {\bf density of states}, a probability 
measure $dk=(F^{-1})'$ on $\mathbb{R}^+$. For finite $G$, the measure $dk$ is a 
discrete pure point measure with support on the
spectrum of $L$ (which is a finite set of points). 
Point-wise convergence of $F$ implies point-wise convergence of 
$F^{-1}$ and so weak-* convergence of the density of states. 

\paragraph{}
For two graphs on the same vertex set, the {\bf graph distance}
$d(G,H)$ is the minimal number of edges which need to be modified 
in order to get from $G$ to $H$. In general no host graph is a priori given, 
in which $G,H$ can be compared. In that case, we define $n(G,H)$ to be the minimum 
number of vertices which a host graph containing both $G,H$ can have. 
This means that the distance function $d(G,H)$ can be extended to graphs which do not 
have the same vertex set; we just define $d(G,H)$ as the minimal possible distance
we can get by embedding both in a common graph with $n(G,H)$ elements.
It is possible to formulate this differently: the graph distance is the minimal 
cardinality of the symmetric difference $E(G) \Delta E(H)$ if $G,H$ are both seen 
as a sub-graph in a complete graph $K_m$. This distance does not depend 
on the size $m$ of the host graph $K_m$ as long as both $G,H$ are subgraphs of $K_m$.

\paragraph{}
The proof of the following theorem parallels the proof in the case of the 
Barycentric refinement $\psi$. The proof itself can not be carried over 
verbatim because the number of simplices of dimension $q$ and $q-1$ essentially grow 
with the same rate, rendering the multi-scale argument invalid. The argument can be 
modified easily however. We can split the graph into different identical
pieces, evolve each piece separately. The boundary part is renormalized using 
Barycentric refinement but it grows slower than the interior. 
If we start with a manifold with boundary, then the boundary can be split. 

\begin{thm} 
In any dimension $q>1$, there is a limiting law for the soft Barycentric refinement.
It is universal in the sense, that it only depends on the maximal dimension $q$ of $G$.
\end{thm}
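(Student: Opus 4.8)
The plan is to reduce the statement to the convergence of the integrated density of states $F^{-1}_{K(G_n)}$, where $G_n = \phi^n(G)$, and then to establish that convergence by combining a rank-perturbation estimate with a local convergence of the refined manifold to a universal periodic lattice. As recalled in the text, pointwise convergence of $F^{-1}$ already yields weak-$*$ convergence of the density of states, so it suffices to identify $\lim_n F^{-1}_{K(G_n)}$ and to show the limit is independent of $G$. I would work throughout with the normalized counting function $F^{-1}_{K(H)}(\lambda) = |V(H)|^{-1} \, \#\{ j : \lambda_j(K(H)) \le \lambda\}$ and with the elementary stability bound $\| F^{-1}_{K(H)} - F^{-1}_{K(H')} \|_\infty \le {\rm rank}(K(H)-K(H'))/|V(H)|$, valid whenever $H,H'$ share a vertex set, since changing $r$ edges alters the Kirchhoff Laplacian by a matrix of rank at most $2r$.

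The geometric heart of the argument is the splitting announced before the statement. For large $n$ I would cut $G_n$ along a family of codimension-one interfaces into congruent interior blocks, each block being the $n$-fold soft refinement of a fixed Euclidean patch; this is possible because a $q$-manifold is locally Euclidean, so away from a bounded number of interface layers every block is combinatorially identical to every other and depends only on $q$. Deleting the interface edges replaces $G_n$ by a disjoint union of such blocks. The crucial point to watch is that these interfaces are $(q-1)$-dimensional \emph{subcomplexes}, and a $(q-1)$-dimensional complex refines at a strictly smaller exponential rate than the $q$-dimensional bulk; it is the number of simplices carried by a fixed lower-dimensional sub-manifold, and not the total number of codimension-one simplices of $G_n$ — which grows at the full bulk rate — that is relevant here. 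This is exactly why the naive multi-scale argument fails and why the present splitting is forced. Consequently the deleted edge set has cardinality $o(|V(G_n)|)$, and by the rank bound above $G_n$ and the disjoint union have the same limiting $F^{-1}$. Since the density of states of a disjoint union equals that of a single block, the problem is localized to one universal block.

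For manifolds with boundary the same mechanism removes the boundary. By $\delta \phi = \psi \delta$ the boundary refines under the ordinary Barycentric refinement $\psi$ and is $(q-1)$-dimensional, so $|V(\delta G_n)| = o(|V(G_n)|)$ is a vanishing fraction and its contribution is again absorbed by a low-rank perturbation. It then remains to identify the limit for one block. Here I would use local convergence: for each fixed radius $k$, the fraction of block vertices whose radius-$k$ neighborhood is isomorphic to the corresponding neighborhood in a universal periodic $q$-lattice $\mathcal{L}_q$ tends to $1$. Because the $k$-th moment $|V|^{-1}{\rm tr}(K^k)$ counts closed weighted walks of length $k$ and is therefore a local quantity, all moments converge to those of the density of states of the periodic Kirchhoff Laplacian on $\mathcal{L}_q$, which by Bloch--Floquet theory is the pushforward of normalized Lebesgue measure on the Brillouin torus $\mathbb{T}^q$ under the symbol $\hat{L}_q(\theta)$. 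For $q=2$ this reproduces the hex-lattice symbol $6-2\cos(x)-2\cos(y)-2\cos(x+y)$ of the figure, together with its van Hove singularity. As $\mathcal{L}_q$ and $\hat{L}_q$ depend only on $q$, the limit $d\mu_q$ is universal.

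The main obstacle I anticipate is the one just isolated: since the codimension-zero and codimension-one simplex counts grow at the same rate, one cannot self-average across scales, and the universal lattice $\mathcal{L}_q$ must be produced by hand. Concretely, I expect the real work to lie in showing that iterated soft refinement of a standard Euclidean triangulation is locally convergent — indeed eventually periodic — and that the soft modification, deleting the interior codimension-one vertices and inserting facet--facet edges, yields a genuinely periodic operator with a computable symbol. The quantitative crux is to bound the number of interface simplices by $o(|V(G_n)|)$ uniformly in $G$; once the rank inequality absorbs these interfaces and the boundary, what remains is the routine moment computation for a periodic operator.
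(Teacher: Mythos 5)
Your reduction --- cut $G_n$ along fixed codimension-one interfaces, observe that the simplices carried by a fixed $(q-1)$-dimensional subcomplex grow strictly slower than the bulk, absorb them (and the boundary) with a perturbation bound, and reduce to a disjoint union of identical blocks coming from the refinement of a single patch --- is exactly the paper's strategy, and your rank/sup-norm stability estimate for $F^{-1}$ is a legitimate substitute for the paper's Lidskii--Last $L^1$ bound on $F$. The gap is in the final step, where you identify the limit on a single block. You assert that iterated soft refinement is locally convergent to a universal \emph{periodic} lattice $\mathcal{L}_q$, ``indeed eventually periodic'', and then finish by a moment computation via Bloch--Floquet theory. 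This is true only for $q=2$, where the soft refinement is a Loop-type scheme: new vertices have degree $6$, old degrees are frozen, and almost every vertex eventually sees the hexagonal lattice with symbol $6-2\cos x-2\cos y-2\cos(x+y)$. For $q\ge 3$ no such periodic limit exists. The paper's own data for $3$-manifolds show that degree-$4$ and degree-$6$ edges both persist with asymptotic density $1/2$ under iterated refinement (for the fourth refinement of $K_4$ one counts $35184$ interior edges of degree $4$ against $34384$ of degree $6$ and only $576$ of degree $5$), so the local geometry stays heterogeneous at every scale. Moreover, the density of states of a periodic operator with non-constant real-analytic band functions is absolutely continuous, whereas the computed $q=3$ soft limit measure is not: the paper stresses that only for $q=2$ one ``still'' has an absolutely continuous measure of compact support. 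So the ``routine moment computation for a periodic operator'' is not available, and your argument as written proves convergence only in dimension $2$.

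What the paper does instead for a single block is a self-similarity argument: $\phi^{n+1}(K_{q+1})$ decomposes into several copies of the pieces making up $\phi^{n}(K_{q+1})$, glued along interfaces whose simplex count grows at a strictly smaller rate, which yields $\|F_{G_n}-F_{G_{n+1}}\|_1\le C_q\gamma^n$ with $\gamma<1$, so $F_{G_n}$ is Cauchy in $L^1([0,1])$. Only the \emph{existence} of the limit is claimed; no periodic model is produced for $q\ge 3$. If you replace your Bloch--Floquet identification by such a Cauchy/renormalization argument on the universal block, your proof closes; as written, the step producing $\mathcal{L}_q$ fails in every dimension above $2$.
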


\begin{proof}
Some of the arguments go over directly from the proof in the Barycentric case. 
First of all: (i) $||F_G-F_H||_1 \leq 4 d(G,H)/n(G,H)$.
Proof: The Kirchhoff Laplacians $L,K$ of $G,H$ satisfy
$\sum_{i,j} |L_{ij}-K_{ij}| \leq 4 d(G,H)$
because each edge $(i,j)$ affects only the four matrix
entries $L_{ij}, L_{ji},L_{ii},L_{jj}$. By the {\bf Lidskii-Last inequality}:
\cite{SimonTrace,Last1995} for any two symmetric $n \times n$ matrices $A,B$ with
eigenvalues $\alpha_1 \leq \alpha_2 \leq \dots \leq \alpha_n$ and
$\beta_1 \leq \beta_2 \leq \dots \leq \beta_n$.
For two subgraphs $G,H$ of a common graph with $n$ vertices, and Laplacians $L,H$,
the inequality gives
$||\lambda-\mu||_1 \leq 4 d(G,H)$ so that $||F_{G'}-F_{H'}|| \leq 4 d(G,H)/n$
if $G',H'$ are the graphs with edge set of $G$ and vertex set of the
host graph having $n(G,H)$ vertices. The lemma follows.

(ii) We now check the result for a single $q$-simplex $G=K_{q+1}$. 
The soft refined graph $G_n = \phi^n(G)$ is for $n>1$
a union of $q$ manifolds $G_{n,k}$ with boundary $T_{n,k} \cup S_{n,k}$, where 
$S_{n,k}$ is the n'th Barycentric refinement of the k'th $(q-1)$ boundary simplex. 
While the growth rate of the simplices in $T_n \cup S_n$ has a similar exponential growth
We use that the intersection of two different 
$G_{n,k}$ and $G_{n,l}$ is either empty or then one of the $T_{n,k}$, 
which grows with a different scale. 
If $G$ is the disjoint copy of $(q+1)$ copies of $H$, then $F(G)=F(H)$. 
There exists a constant $C_q$ such that 
$||F_{G_n}-F_{G_{n+1}}||_1 \leq C_q \frac{1}{\gamma}^n$
where $\gamma=1/(q+1)$. So, $F_{G_m}$ is a Cauchy sequence in $L^1([0,1])$
having a limit in this Banach space. The limiting $F$ produces a 
limiting measure $dk$. \\

(iii) In general, the boundary of $G_n$ grows slower than 
$G_n$, even so it is not exponentially slower. 
Let $U_n$ denote the interior, the graph generated by interior 
points.  Then $|F_{U_n} - F_{G_n}|_1 \to 0$. 
A general initial simplicial complex $G$, we write it as a union of the open set $U$
consisting of maximal simplices and the closed set $K$ which is $G \setminus U$ and
is the {\bf skeleton complex} of dimension $q-1$.   \\

If $U_n$ is the interior of the refinement $\phi^n(\overline{U})$,
$K_n = \phi^n(K)$ and $G_n=\phi^n(G)$, then $U_n \cup K_n = G_n$.
Again we have $|F_{U_n} - F_{G_n}|_1 \to 0$ and since $U$ is disjoint union of identical
parts, the result reduces to the situation in (i). 
\end{proof}

\section{Chromatic number}

\paragraph{}
We now turn to the second topic of this paper. 
The {\bf chromatic number} $c(G)$ of a graph $G=(V,E)$ is the 
minimal number of colors $C=\{1,\dots,c\}$  needed so that there exists a 
locally injective map $f: V \to C$, meaning $f(v) \neq f(w)$
if $(v,w) \in E$. The {\bf vertex arboricity} $a(G)$ of $G$ is the minimal 
number of forests that are needed to cover $V$ in such a way that every 
tree generates itself in $G$. (A subgraph $H$ of $G$ generates itself if
all pairs $(a,b) \in E(G)$ with $a,b \in V(H)$ has $(a,b) \in E(H)$.) 
The chromatic number could also be called {\bf vertex seed arboricity} 
because it asks to cover $V$ with {\bf seeds} in $G$, (a seed collection 
is forest in which every tree is a single vertex).
There is the general relation $c(G)/2 \leq a(G) \leq c(G)$:
the right inequality holds because every seed is a tree, the left
inequality holds because every tree can be colored by 2 colors, 
leading to $c(G) \leq 2 a(G)$. 
Both vertex arboricity and chromatic number are difficult quantities to compute
in general. This is in contrast to edge arboricity that can be tackled thanks to
the {\bf Nash-Williams} theorem. 

\paragraph{}
Let $\hat{G}$ denote the {\bf dual manifold} of $G$. It has the maximal $q$-simplices as 
vertices and connects two if they intersect in a $(q-1)$ simplex. 
The dual manifold is a graph
without triangles. We can attach $k$-cells to the duals of $(q-k)$-dimensional simplices and 
so get a natural dual CW complex defined by $G$. The $f$-vector of $\hat{G}$ is 
$(f_d,f_{d-1}, \dots, f_0)$. The dual graph has nice features and can be used to define 
a geodesic flow and sectional curvature as we will work out elsewhere. 
The reason to work in the dual is that for paths in $G$, the radius of 
injectivity is always $1$. For the dual graph, there is even a chance
to have a unique shortest connection between any two points. This happens for example
for the octahedron. 

\paragraph{}
Let $c(\hat{G})$ denote the chromatic number of $\hat{G}$ and let $a(\hat{G})$ 
denote the vertex arboricity of $\hat{G}$. The following result can be seen as a 
version of Gr\"otzsch's theorem, even so it does not prove for $q=2$ the
more comprehensive Gr\"otzsch's theorem for planar graphs. 

\begin{thm}[Dual 2 forest and 3 color theorem] For any $q$-manifold, 
$a(\hat{G})=2$ and $c(\hat{G}) \leq 3$. 
\end{thm}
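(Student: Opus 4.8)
The plan is to treat the two claims $a(\hat{G})=2$ and $c(\hat{G})\le 3$ together, using three structural facts: $\hat{G}$ is $(q+1)$-regular (a facet of a $q$-manifold without boundary has exactly $q+1$ ridges, each shared with a unique neighbouring facet), it is triangle-free (as recorded just before the statement), and it is the $1$-skeleton of the dual manifold $\hat{M}$. The lower bounds are immediate: $\hat{G}$ contains the dual polygon around any codimension-$2$ simplex, hence is not a forest, so $a(\hat{G})\ge 2$; and it contains an edge, so $c(\hat{G})\ge 2$. All the content sits in the two upper bounds. Boundary faces only lower degrees, so a manifold with boundary is easier and I would reduce to the closed case.

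First I would settle the base case $q\le 2$. For $q=1$ a connected manifold is $C_n$ and $\hat{G}=C_n$, for which $a=2$ and $c\le 3$ are clear. For $q=2$ the graph $\hat{G}$ is cubic and triangle-free. Being $3$-degenerate, a cubic graph has vertex arboricity at most $\lceil (3+1)/2\rceil=2$; and being connected, triangle-free (so distinct from $K_4$) and not an odd cycle, Brooks' theorem gives $c(\hat{G})\le \Delta=3$. This is the announced Gr\"otzsch-type statement, here obtained from Brooks.

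The engine for general $q$ is a recursive link decomposition. For a vertex $v$ of $G$, the facets through $v$ are exactly the sets $\{v\}\cup\sigma$ with $\sigma$ a facet of the link $S(v)$, and two such facets $\{v\}\cup\sigma$ and $\{v\}\cup\sigma'$ share a ridge of $G$ precisely when $\sigma,\sigma'$ share a ridge of $S(v)$. Hence the subgraph of $\hat{G}$ induced on the star of $v$ is canonically the dual graph $\widehat{S(v)}$ of the $(q-1)$-sphere $S(v)$. Since every facet meets $q+1$ vertices and every edge of $\hat{G}$ lies inside the star of each of the $q$ vertices of the corresponding ridge, the family $\{\widehat{S(v)}\}_{v\in V(G)}$ covers all vertices and all edges of $\hat{G}$. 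By induction on $q$ each piece $\widehat{S(v)}$ carries a proper $3$-colouring and a $2$-forest partition. I would then globalize: fix an orientation of $\hat{M}$ (passing to the orientation double cover and descending the colouring in the non-orientable case) and build the colouring as a $\mathbb{Z}_3$-valued assignment whose consistency around a dual polygon is compatible with that polygon being a cycle of length $\ge 4$, since the girth of $\hat{G}$ is at least $4$. The arboricity runs off the same induction, using that adjoining a facet across a single ridge to a partially built forest keeps it acyclic.

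The hard part is exactly this globalization, and I expect it to be the main obstacle. The difficulty is structural: a $(q+1)$-regular triangle-free graph need not have $a\le 2$ or $c\le 3$ (for instance $K_{q+1,q+1}$ fails both), so neither claim can be read off from regularity, girth, or degeneracy alone, and the spherical-link hypothesis must enter in an essential way. Concretely, the local colourings on overlapping stars $\widehat{S(v)}$ need not agree, so a naive precoloring-extension induction does not close; the real work is to manufacture one globally consistent colouring. That is where I would invest the orientation and cocycle argument, supplemented if necessary by a shelling or a discrete-Morse ordering of the facets that controls the order in which the stars are processed.
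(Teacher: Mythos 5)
Your lower bounds and your $q=2$ case are fine --- indeed, getting $c(\hat{G})\le 3$ for $2$-manifolds from Brooks' theorem (cubic, triangle-free, hence not $K_4$ and not an odd cycle) and $a(\hat{G})\le 2$ from the maximum-degree bound is a clean, self-contained route that the paper does not take. But for $q\ge 3$ there is a genuine gap, and you have located it yourself: the link decomposition $\{\widehat{S(v)}\}_{v\in V(G)}$ does give pieces that are $3$-colorable by induction, but nothing you write forces the colorings of overlapping stars to agree, and the appeal to ``an orientation and cocycle argument'' or ``a shelling'' is a placeholder, not an argument. Since, as your own $K_{q+1,q+1}$ example shows, $(q+1)$-regularity plus girth $\ge 4$ does not imply $c\le 3$ or $a\le 2$, the entire content of the theorem sits in exactly the globalization step you have not carried out.

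The paper closes this gap by strengthening the induction hypothesis rather than by patching together local colorings: it proves an extension statement (the ``Acyclic 3-color theorem'') asserting that for a $q$-manifold \emph{with boundary}, any acyclic $3$-coloring of the dual of the boundary extends to the interior, and likewise any $2$-forest cover of the boundary extends. This is established by a minimal-counterexample argument, doubly inductive in the dimension and in the number of facets: in a putative minimal counterexample one colors the cycles through an interior dual vertex $x_0$ by the parity $(-1)^{{\rm dist}(x,x_0)}$, uses a third color to absorb the odd dual cycles, and then invokes minimality on the complement of the colored submanifold. The closed case follows by cutting $G$ along a $(q-1)$-sphere, coloring the interface by induction on dimension, and extending inward on both sides. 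The structural lesson is that the precoloring-extension formulation is what makes the induction close; a cover by overlapping duals of vertex links does not obviously admit a consistent global selection, and completing your globalization would, as far as I can see, amount to reinventing the boundary-extension statement. (Be aware that the paper's own proof of that extension theorem is itself terse --- in particular the step asserting that non-extendability for $G$ forces non-extendability for $G\setminus H$ is claimed rather than argued --- so if you rework this, the extension lemma is the part that deserves the careful writing.)
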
 

We will prove this by establishing a more general result. But here is a consequence
which had prompted the above theorem.

\begin{coro}
If $G$ is a $q$-manifold, then $c(\psi(G))=q+1$ and 
$c(\phi(G)) = (q-1) + c(\hat{G}) \leq q+2$.
\end{coro}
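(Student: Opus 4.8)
The plan is to treat the two equalities separately, using the explicit dimension coloring recorded in the introduction for the $\psi$-part and a two-block coloring for the $\phi$-part. The equality $c(\psi(G))=q+1$ is essentially already noted: the coloring $g(x)=\dim(x)+1$ properly colors $\psi(G)$ with $q+1$ colors because every adjacency in $\psi(G)$ is a strict inclusion, while any maximal flag $\sigma_0\subset\sigma_1\subset\cdots\subset\sigma_q$ inside a facet induces a clique $K_{q+1}$, forcing $c(\psi(G))\ge q+1$. For the soft refinement the organising observation is structural: the top-dimensional vertices of $\phi(G)$, the facets, induce exactly a copy of $\hat G$, since two facets are adjacent in $\phi(G)$ precisely when they share an interior $(q-1)$-face, which is the adjacency defining $\hat G$. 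The faces of dimension $0,\dots,q-2$ retain the inclusion adjacency among themselves and to the facets, and each boundary facet (dimension $q-1$) is attached only by inclusion, to its unique containing facet.

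For the upper bound $c(\phi(G))\le (q-1)+c(\hat G)$ I would color in two blocks. Every face of dimension $k\le q-2$ gets $g(x)=\dim(x)+1\in\{1,\dots,q-1\}$, which is proper since the only adjacencies among low faces are inclusions. The facets are colored by a proper coloring of $\hat G$ transported into the disjoint palette $\{q,\dots,q-1+c(\hat G)\}$, which handles all facet--facet edges and, being disjoint from the low palette, all facet--subface edges. Finally a boundary facet sees the $q-1$ low colors through its subfaces and the single color of its containing facet, hence at most $q$ forbidden colors; since $\hat G$ carries an edge whenever $G$ has two facets we have $c(\hat G)\ge 2$, so the palette of size $(q-1)+c(\hat G)\ge q+1$ always leaves a free color. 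Together with $c(\hat G)\le 3$ from Theorem 2 this gives $c(\phi(G))\le q+2$.

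For the matching lower bound I would localize at a $(q-2)$-simplex. Fix an interior $(q-2)$-simplex $\omega$; the facets containing $\omega$ form the boundary cycle $C_m$ of the dual $2$-cell of $\omega$, while a maximal flag $\sigma_0\subset\cdots\subset\sigma_{q-2}=\omega$ of its subfaces gives $q-1$ low faces inducing $K_{q-1}$, each of which contains $\omega$ and is therefore adjacent to every facet of $C_m$. Thus $\phi(G)$ contains the graph join $K_{q-1}\oplus C_m$, whose chromatic number is $(q-1)+c(C_m)$. Since $m\ge 3$ this already yields $c(\phi(G))\ge q+1$, settling the case $c(\hat G)=2$. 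If $c(\hat G)=3$ it suffices to produce one $\omega$ whose dual cell $C_m$ is an \emph{odd} cycle, since then the join forces $c(\phi(G))\ge (q-1)+3=q+2$, matching the upper bound.

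The main obstacle is exactly this last step: promoting the global non-bipartiteness of $\hat G$ to a local odd configuration, an odd dual $2$-cell joined to a $K_{q-1}$. Equivalently one must show that $\phi(G)$ being $(q+1)$-colorable forces $\hat G$ to be bipartite; the easy direction is that a $(q+1)$-coloring leaves only two colors for each $C_m$ in the join $K_{q-1}\oplus C_m$, so every dual $2$-cell is even, and the hard direction is to argue that the even dual $2$-cells generate the mod-$2$ cycle space of $\hat G$ so that $\hat G$ itself is bipartite. This is the Gr\"otzsch-flavoured core of the statement, where the structure of the dual manifold---the fact that its $2$-skeleton carries the relevant $1$-cycles---must be used, and it is presumably the content of the more general result announced before Theorem 2.
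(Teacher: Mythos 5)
Your upper-bound argument is exactly the paper's proof. The paper disposes of this corollary in two sentences: color the faces of dimension at most $q-2$ by their dimension and color the facets with a disjoint palette of $c(\hat G)$ colors. You carry out the same two-block coloring, and in fact more carefully: you check the boundary facets (which the paper's proof silently ignores), and you verify that the two adjacency types in $\phi(G)$ --- inclusion, and intersection in an interior $(q-1)$-face --- are each absorbed by one of the two blocks. For $c(\psi(G))=q+1$ you likewise reproduce the dimension coloring already recorded in the introduction, adding the flag-clique lower bound. So for everything the paper actually proves, you take the same route.

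The gap you flag in the lower bound is real, but be aware that it is equally a gap in the paper: the printed proof establishes only $c(\phi(G))\le (q-1)+c(\hat G)$ and never addresses the claimed equality. Your partial lower bound $c(\phi(G))\ge q+1$ via the join $K_{q-1}\oplus C_m$ over an interior $(q-2)$-simplex is sound. However, the route you propose for the remaining case --- deducing bipartiteness of $\hat G$ from evenness of all dual $2$-cells --- cannot work in general. The dual $2$-cells (the cycles $C_m$ dual to $(q-2)$-simplices) generate only the null-homologous part of the $\mathbb{Z}_2$-cycle space of $\hat G$, since the dual CW complex is homotopy equivalent to $G$; hence for manifolds with $H_1(G;\mathbb{Z}_2)\neq 0$ the parity cochain can be nontrivial on a cycle that every dual $2$-cell misses, and $\hat G$ can be non-bipartite even though every dual $2$-cell is even. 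The paper's own Figure 1, which reports $c(\phi^3(G))=5>q+2$ for the projective plane, should be read as a warning that the stated equality and even the bound $q+2$ are delicate outside the simply connected setting. If you want to close your gap cleanly, restrict to $q$-spheres, where $H_1(G;\mathbb{Z}_2)=0$ and your homological step does go through, or else invoke the acyclic $3$-color theorem (Theorem 3) globally rather than trying to localize the odd cycle at a single $(q-2)$-simplex.
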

\begin{proof}
The vertices coming from simplices of dimension $q-2$ or less can be 
colored by the dimension $g(x) = {\rm dim}(x)$. 
The vertices in $\hat{G}$ that represent $q$-simplices in $G$ are then 
colored with a disjoint set of $c(\hat{G})$ colors.  
\end{proof}

\paragraph{}
This in particular shows that a {\bf 4-color theorem for soft Barycentric
refined 2-manifolds} $\phi(G)$ is no problem: first color the vertices that have
been vertices in $G$ and vertices that had been edges in $G$. 
Then use the remaining colors to color the vertices that had been triangles in 
$G$. 

\paragraph{}
A $q$-sphere $G$ is minimally $(d+1)$-colorable if and only if for all 
$(d-2)$ simplices $x$ of the dual circle $x'$ have even length. 
A $d$-manifold is $(d+1)$-colorable if and only if every closed circle has odd length. 
In the planar case, this was an observation of Heawood. In some way, it appears
in \cite{Koenig1936}. Start with coloring one maximal simplex, then the other
colors in the neighborhood are determined since $G$ is simply connected. 
(Simply connectivity is defined for q-manifolds by defining two simple paths to be homotopic if
they can be transformed into each other by {\bf simple homotopy steps}, which means taking a
triangle which contains one or two edges of the graph and replace it with the 
complement in the triangle, a closed path circling a triangle goes to a point. 
A graph is simply connected if it is homotopic to a point.)
If the graph has the property that every closed loop in the dual graph has 
even length, then no conflict appears during the coloring. 

\paragraph{}
From the chromatic bound $3$, we get two
forests covering $G$ if we can {\bf color acyclically}.
The reason is that we can take 2 colors which are not cyclic and form one forest
with them. The third color is the third forest (consisting of seeds only). 
So, we only have to show that we can color $\hat{G}$ with 3 colors in such 
a way that there are no {\bf  Kempe chains}. 
Kempe chains are cycles in the graph on which the coloring has 2-colors only. 

\paragraph{}
We prove a stronger statement by induction. 

\begin{thm}[Acyclic 3-color theorem]
For every dual $\hat{G}$ of a $q$-manifold $G$ with boundary, for which the boundary has been 
acyclically 3-colored, the acyclic 3-coloring can be extended to the interior,
preserving the acyclic property. Also any 2-forest cover on the 
boundary can be extended to a 2-forest cover of the graph in which the interior
is included. 
\end{thm}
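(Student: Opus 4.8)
The plan is to prove both assertions simultaneously by induction, processing the facets of $G$ from the already-colored boundary region inward, one facet at a time. First I would dispose of the 2-forest cover claim by reducing it to the acyclic 3-coloring, exactly as indicated earlier in the text: once $\hat{G}$ carries a Kempe-chain-free 3-coloring, two of its color classes are acyclically joined and hence together induce a single forest, while the third class is a forest of isolated seeds, giving $a(\hat{G})=2$. A 2-forest cover on the boundary then extends in lockstep with the coloring, so it suffices to treat the coloring extension and read off the cover at each step. For the base of the induction I would check $q=1$, where a $1$-manifold with boundary is a disjoint union of paths, $\hat{G}$ is again a disjoint union of paths, and the extension is immediate; and the case of empty interior, where there is nothing to extend.

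For the inductive step I would use that a $q$-manifold with boundary admits an ordering of its interior facets $f_1,\dots,f_N$, from the boundary inward, in which each $f_i$ is attached to the union of the colored region and $f_1,\dots,f_{i-1}$ along a connected union of $(q-1)$-faces. The neighbors of $f_i$ in $\hat{G}$ at the moment of attachment are precisely the facets sharing these interior $(q-1)$-faces, and since $\hat{G}$ is triangle-free these neighbors are pairwise non-adjacent; they therefore impose only constraints of the ``equal versus distinct color'' type with no hidden local clique. With three colors available there is always a properly extending choice, so the entire content is to select a color for $f_i$ that creates no new bi-chromatic cycle.

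To control Kempe chains I would argue in the cycle space of $\hat{G}$. The cycles that adding $f_i$ can newly make bi-chromatic all pass through $f_i$, and modulo cycles already present they are $\mathbb{Z}_2$-sums of the \emph{dual circles} that $f_i$ closes, namely the dual spheres of the co-dimension-$2$ simplices of its attaching ball, which are $1$-spheres and hence cycles in $\hat{G}$. The invariant I would carry is that no dual circle is $2$-alternately colored: an odd dual circle is automatically non-bi-chromatic under any proper coloring, while an even one is kept safe by placing a third-color \emph{defect} on it. When $f_i$ completes a dual circle, I choose its color so that the circle retains such a defect. Since a global bi-chromatic cycle would force every summand dual circle to be bi-chromatic as well, the invariant forbids it, which upgrades ``no bi-chromatic dual circle'' to full acyclicity and closes the step.

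I expect the main obstacle to be the case where $f_i$ attaches along several $(q-1)$-faces at once and thereby closes more than one dual circle simultaneously: a single color for $f_i$ must then place or preserve a defect on each of these circles while remaining proper, and their partial colorings and parities could in principle conflict. I would handle this by refining the attachment order so that each $f_i$ closes the circle around only one genuinely new co-dimension-$2$ face (attaching along a single $(q-1)$-face, making $f_i$ a leaf of the current $\hat{G}$, whenever the manifold structure permits), and by propagating a fixed reference coloring via simple connectivity so that defect placement is never over-determined. It is exactly here that the full manifold structure, rather than mere triangle-freeness of $\hat{G}$, is needed, and where the given boundary data carries the global parity information that makes the extension consistent.
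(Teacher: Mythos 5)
Your route differs from the paper's: the paper argues by minimal counterexample, coloring a neighborhood $H$ of an interior facet by the distance parity $(-1)^{\mathrm{dist}(x,x_0)}$ (with a third color inserted at odd bones) and then observing that $G\setminus H$ would be a smaller counterexample; you instead shell the facets inward one at a time and maintain an invariant in the cycle space. The difference of route is not the problem; the problem is that your key step does not hold. You write that ``a global bi-chromatic cycle would force every summand dual circle to be bi-chromatic as well.'' This implication goes the wrong way: if a cycle $C$ is the $\mathbb{Z}_2$-sum of dual circles $D_1,\dots,D_k$, then the vertices of the $D_j$ that do not lie on $C$ are invisible to the colors seen along $C$, so $C$ can be $2$-colored while every $D_j$ carries its third-color defect on a shared vertex interior to $C$. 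Hence ``no dual circle is bi-chromatic'' does not upgrade to ``no cycle is bi-chromatic,'' and the acyclicity of the final coloring is not established. Moreover, the cycle space of $\hat{G}$ is generated by the dual circles only when $H_1(G;\mathbb{Z}_2)=0$; for a torus or a projective plane, both within the scope of the theorem, $\hat{G}$ contains essential cycles that are not sums of dual circles, and your invariant says nothing about them.

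The second gap is the one you flagged yourself but did not close. A facet that closes several dual circles simultaneously cannot be ordered away: whenever a facet is added whose neighbors in $\hat{G}$ are all already present, it closes every dual circle through it at once, and attachment orders in which each new facet is a leaf of the current $\hat{G}$ do not exist in general (non-shellable balls exist from dimension $3$ on). Your fallback of ``propagating a fixed reference coloring via simple connectivity'' assumes simple connectivity, which the theorem does not grant. So the step in which a single color choice for $f_i$ must be simultaneously proper and place or preserve a defect on each newly closed circle remains unproved, and this is exactly where the combinatorial difficulty of the statement lives. For comparison, the paper's own argument avoids this facet-by-facet bookkeeping by cutting out the distance-colored piece $H$ and invoking minimality of the counterexample, although it too leaves implicit how the coloring of $H$ is made compatible with the prescribed boundary coloring.
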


\paragraph{}
Using induction with respect to dimension shows that
the acyclic color theorem implies the dual 3 color theorem in the case of 
a manifold $G$ without boundary. 
By taking a manifold $G$ and cutting out a ball, we get two manifolds with boundary. 
By induction with respect to dimension, the interface $(q-1)$-manifold can be
acyclically 3-colored. By the theorem, it can be extended on both sides and 
so color the manifold $G$. Now to the proof of the acyclic 3-color theorem: 

\begin{proof} 
The statement holds in dimension $q=1$ because every 1-manifold, with or without
boundary can be colored with 3 colors and in the boundary case, the color on the 
boundary (2 isolated points) can be extended to the interior. \\

Let $q$ be a minimal dimension with a counter example and let 
$n$ be the minimal number of facets of a dual q-manifold with 
boundary for which such an extension is no more possible.  \\

Take such a minimal example $G$. It must have an interior 
simplex (and so by the manifold definition has vertex degree 
$q+1$). If there was no interior simplex the color of the 
boundary $\delta G$ would already directly color $G$.
Pick an interior point $x_0 \in V(\hat{G})$. Color all cycles 
through $x_0$ with colors $A,B$, by taking the color 
$(-1)^{{\rm dist}(x,x_0)}$. For bones (minimal cycles) of odd length, this leads to double points,
requiring a third color $C$.\\

We have now colored a sub-manifold $H$ of $G$ with $3$ colors. If the entire 
manifold can not be colored, then also $G \setminus H$ can not be colored
and we  would have obtained a smaller manifold with boundary for which the boundary is 
acyclically colored by $3$ colors but for which we can not extend the coloring 
to the interior. This contradicts that $G$ was the smallest one. 
\end{proof} 

\paragraph{}
Les us add two remarks:  \\
{\bf Remark 1)} The 4-color theorem \cite{AppelHaken1,RingelMapColorTheorem} 
can not be proven as above. There is no such extension result from the boundary 
to the interior for manifolds with boundary. And that makes the 4-color theorem difficult.
The statement that any 3-coloring of the boundary of a 2-disk can 
be extended to a 4-coloring of the interior would actually be equivalent to 
the 4-color theorem. One has tried to use "minimal example" arguments 
to prove the 4-color theorem and Kempe managed to prove his {\bf 5 color theorem}
using a minimality argument. In a "cut and conquer argument" we would
like to find a path through a 2-ball which only needs 3 colors reducing 
the coloring problem to two smaller problems. But the boundary extension 
does not work. The Birkhoff diamond is a small example where the boundary is 
colored with 3 colors but where the two interior points can not be assigned 
any color. Proving the 4-color theorem in this way would require to list all
counter examples, establish that there are finitely many of them and check that
it is possible to color them nevertheless. 

\paragraph{}
{\bf Remark 2)} {\bf Gr\"otzsch's theorem} telling that every 
triangle free planar graph can be colored by 
$3$ colors is stronger than the dual 3-color theorem (when restricted to dimension $2$).
Not all planar triangle-free graphs are dual to 2-spheres. 
There are 3-connected planar, triangle free graphs which are
not the dual graphs of a 2-sphere. For example, take a cyclic $C_8$ and connect all 
even vertices to an additional point $A$ and all odd vertices to an 
additional point $B$. This graph is maximal among triangle-free planar graphs, is 
3-connected but is not the dual graph $\hat{G}$ of a 2-sphere $G$ 
because the vertex degree of the points A,B in $\hat{G}$ is 4, 
while it should be 3 for any dual of a 2-manifold.

\begin{figure}[!htpb]
\scalebox{0.25}{\includegraphics{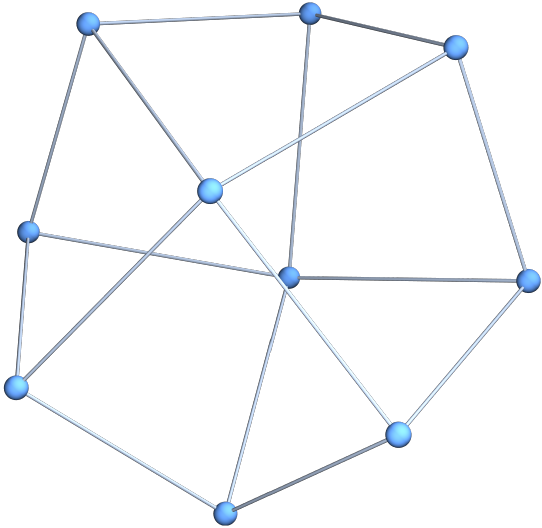}} 
\label{nonmanifold}
\caption{
A 3-connected planar triangle-free graph that is not the
dual graph of a 2-manifold. The reason is that it is not 3-regular. 
It is a subgraph of the prism graph $C_8 \oplus \overline{K_2}$.
Gr\"otsch's theorem applies to this graph, but not to the dual 3 color theorem. 
}
\end{figure}

\section{Fisk complex}

\paragraph{}
The {\bf Fisk complex} of a $q$-manifold $G$ is the $(q-2)$-dimensional 
simplicial complex generated by the collection of $(q-2)$-simplices $x$
for which the dual simplex $\hat{x} = S(x_0) \cap S(x_2) \cap \dots \cap S(x_{q-2})$ 
(which always is a cyclic graph with $4$ or more elements) has an odd number
of vertices. If $G$ is simply connected and oriented and the Fisk complex
of $G$ is empty, then we can color $G$ with the minimal $q+1$ colors. 
The Barycentric refinement destroys the entire Fisk complex as it doubles the size
of the dual spheres. For soft Barycentric refinement however, this is not the 
case and the refinement process also refines the Fisk complex. 

\paragraph{}
For a 2-manifold, the {\bf Fisk complex} is the collection of vertices on which the 
vertex degree is odd. For general $q$, it can be written as a union of $(q-2)$-manifolds. 
For an icosahedron for example, all vertices belong to 
the Fisk complex. The Fisk complex is empty if and only if the graph is Eulerian. 
The Barycentric refinement of a 2-manifold doubles the vertex degree of any 
given vertex and introduces new vertices of vertex degree 4 or 6. 
The soft Barycentric refinement does not change because 
the number of odd degree vertices stays the same. 
For a 3-manifold $G$, the Fisk complex is a one dimensional complex which 
is a union of closed curves in $G$. 

\paragraph{}
For a {\bf 3-manifold} $G$, the Fisk complex $F$ is the set of edges $x=(a,b)$ 
for which the dual sphere $x'=S(a) \cap S(b)$ has an odd number of vertices. 
The set of these simplices union of closed curves because by the Euler Handshake 
formula, the number of odd degree vertices is even in any graph. 
Also for 3-manifolds, the edge degree spectrum in the form of the 
curvature values at these edges does not grow when doing soft Barycentric 
refinements. Asymptotically, the contributions of the Fisk complex does not
matter. Since in the limit we will have the same number of degree 4 and degree 6
edges, the average edge degree converges to 5. 
For example, if $G=K_4$, then the edge degrees of $\psi(G)$ in the interior
are 4,5 or 6. For $\psi^4(G)$, there are only 576 degree 5, but 35184 degree 4 and 
34384 degree 6 edges. (There are also 5184 degree 3 edges but they are at the 
boundary of $\phi^4(G)$. 

\begin{figure}[!htpb]
\scalebox{0.75}{\includegraphics{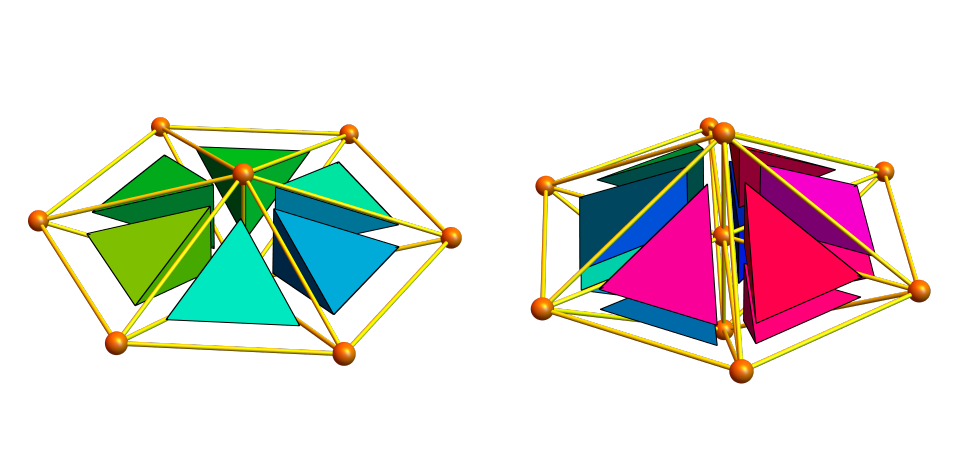}}
\label{codimension 2 simplex}
\caption{
For a 3-manifold, a codimension-2 simplex is an edge $x$.
The dual $\hat{x}$ is a cyclic polytop $C_n$, where $n$
is the number of tetrahedra in $G$ that contain $x$. After a soft
Barycentric refinement, the edge $x$ has doubled, but
each has the same dual. In the interior, the Barycentric refinement
is more tame. 
}
\end{figure} 

\paragraph{}
For a 4-manifold, the Fisk complex is the set of {\bf triangles} $x$ for which 
the dual sphere $x'=S^+(x)$ of $x$ 
(a circular graph with $4$ or more vertices) has an 
odd number of vertices. For a general $q$-manifold, the Fisk complex is generated 
by $(q-2)$-simplices for which the dual circle has odd length. 
Take a triangle $x$ in the Fisk complex. At every of the edges, there is at least one
other triangle of the Fisk complex attached. Continue building like this
a manifold. Now start with an other triangle not yet covered. There are many ways
how we can build up the Fisk complex. 
The total Euler characteristic can depend on the decomposition. 

\paragraph{}
Let us remark that the Fisk complex of a $q$-manifold is a union of $(q-2)$-manifolds
but that the decomposition into $(q-2)$ manifolds is not unique in general. 
Lets illustrate this for $q=3$: given an edge $(a,b)$ in the Fisk complex $F$, it 
defines a point $a$ in $S(b)$ which has odd degree. 
There must be an other point $c$ of odd degree so that we
can continue the path $(a,b,c)$.
For $q=4$, given a triangle $x=(a,b,c)$ in the Fisk complex $F$, 
it defines an edge $y=(a,b)$ in the 3-sphere $S(c)$. The dual circle to $y$ 
in $S(c)$ is the same and odd. There is therefore a continuation of the curve 
in $S(c)$ which produces a new triangle $z=(a,b,d)$ in the Fisk complex $F$. 
We can do this for any of the three sides. 

\paragraph{}
Given a 4 manifold and a set $T$ of triangles which have the property that at every 
boundary edge there are two or more triangles attached, Then $T$ generates a 
simplicial complex that has the property that it is the union of finitely many 
2-manifolds $M_j$. The intersection 
of any two of them is either empty or a curve. 
It follows that the Euler characteristic of $T$
is the union of Euler characteristics of $M_j$. 
Is it possible that some $M_j$ are orientable while some others are not? 

\paragraph{}
{\bf Example:} the join $G$ of the icosahedron graph and a circular graph $C_n$ is 
a 4-sphere. If $n$ is even, the Fisk complex $F$ is a union of 12 octahedra. 
The union of all edges of $F$ generates the bipartite graph $K_{2,12}$. 
If $n$ is odd, then the Fisk complex $F$ is dense in $G$ in the sense that all 
edges are covered.

\section{The two dimensional case}

\paragraph{}
A $2$-manifold is a graph such that every unit sphere is a circular graph with $4$ or more
vertices. The {\bf Barycentric refinement} of a 2-manifold $G=(V,E)$ takes $V'=V \cup E \cup F$
as vertices and takes as $E'$ the set of pairs $(x,y)$ such that $x \subset y$ or
$y \subset x$. If $f= [ |V|,|E|,|F| ]$ is the $f$-vector of $G$, then
$f(G') = A f(G) = \left[ \begin{array}{ccc} 
                  1 & 1 & 1 \\
                  0 & 2 & 6 \\
                  0 & 0 & 6 \\
                 \end{array} \right] f(G)$. Because the eigenvalues of $A^T$
is the vector $[1,-1,1]$, the Euler characteristic of $G'$ and $G$ are the same. The vertex
degrees double in each step. The linear rule $A$ holds independently of whether
$G$ is a manifold or not. 

\paragraph{}
The  {\bf soft Barycentric refinement} of a $2$-manifold takes $V'=V \cup F$ as vertices
and $E'$ as the set of pairs $(x,y)$ such that $x \subset y$ or $y \subset x$ or $x \cap y$
is in $E$.  Now the $f$-vectors transform as
$f' = \left[ \begin{array}{ccc} 
                  1 & 0 & 1 \\
                  0 & 1 & 3 \\
                  0 & 0 & 3 \\
                 \end{array} \right] f$ but this is only true if there are no
boundary faces. The Euler characteristic satisfies $\chi(G)=\chi(G')$.
Unlike for Barycentric refinement, the soft Barycentric refinement
does not have a universal matrix $A$ which works for all complexes. 

\paragraph{}
The soft Barycentric refinement preserves the class of flat tori and 
makes them larger and larger.  In general, unlike for the Barycentric 
refinement, the vertex degrees do not grow in dimension $2$.
In the limiting case, we approach the hexagonal lattice as almost all 
vertices have vertex degree 6. We can describe the limiting measure:

\begin{lemma}
For the hex region (an infinite graph), the Laplacian is 
diagonal after applying a Fourier transform.  It is equivalent 
to the multiplication operator with $f=6-2\cos(x)-2\cos(y)-2\cos(x+y)$.
\end{lemma}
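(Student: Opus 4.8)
The plan is to realize the hex lattice combinatorially as the integer lattice $\mathbb{Z}^2$, where each vertex $n=(n_1,n_2)$ is joined to its six neighbors $n\pm e_1$, $n\pm e_2$, $n\pm(e_1+e_2)$, with $e_1=(1,0)$ and $e_2=(0,1)$. This is precisely the structure appearing in the soft Barycentric limit, where almost every vertex has degree $6$ and the six incident edges run in the three lattice directions $e_1$, $e_2$, $e_1+e_2$. The Kirchhoff Laplacian is then $L=6I-A$, where $A=\sum_v S_v$ is the adjacency operator and $S_v$ is the shift $(S_v\psi)(n)=\psi(n-v)$, with $v$ ranging over the six neighbor vectors. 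The operator $L$ acts on $\ell^2(\mathbb{Z}^2)$ and is translation invariant by construction.

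First I would invoke the standard fact that translation-invariant (convolution) operators on $\ell^2(\mathbb{Z}^2)$ are diagonalized by the Fourier transform $\mathcal{F}\colon \ell^2(\mathbb{Z}^2)\to L^2(\mathbb{T}^2)$, given by $(\mathcal{F}\psi)(x,y)=\sum_{n}\psi(n)\,e^{-i(n_1 x+n_2 y)}$. Under this unitary map each shift $S_v$ with $v=(v_1,v_2)$ becomes multiplication by $e^{i(v_1 x+v_2 y)}$, which is immediate from the computation $\mathcal{F}(S_v\psi)(x,y)=e^{i(v_1 x+v_2 y)}\,(\mathcal{F}\psi)(x,y)$. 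This already establishes the diagonalization claim: $L$ is carried to a multiplication (hence diagonal) operator on $L^2(\mathbb{T}^2)$.

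Next I would sum over the six neighbor vectors to read off the multiplier. Pairing $\pm e_1$, $\pm e_2$, and $\pm(e_1+e_2)$ and using $e^{i\theta}+e^{-i\theta}=2\cos\theta$ gives the symbol of $A$ as
\[
\hat{A}(x,y)=2\cos(x)+2\cos(y)+2\cos(x+y).
\]
Therefore $L$ is unitarily equivalent to multiplication by
\[
\hat{L}(x,y)=6-\hat{A}(x,y)=6-2\cos(x)-2\cos(y)-2\cos(x+y)=f,
\]
which is exactly the asserted statement.

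The content of the lemma is thus entirely geometric, and the hard part is really just the correct identification of the neighbor set: one must check that the third pair of directions is $\pm(e_1+e_2)$ rather than $\pm(e_1-e_2)$, since this is what produces the term $-2\cos(x+y)$ and reflects the triangular (degree $6$) adjacency whose Voronoi cells are hexagons. Once this six-neighbor structure is pinned down, the diagonalization is the routine Bloch/Fourier argument for Laurent-type operators and presents no genuine obstacle. As a payoff, the limiting density of states discussed earlier is then obtained simply by pushing forward the normalized Haar measure on $\mathbb{T}^2$ under $f$, and the van Hove singularity corresponds to the critical points of $f$.
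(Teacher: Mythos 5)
Your proof is correct and is exactly the standard Bloch/Fourier diagonalization the paper has in mind; in fact the paper states this lemma without supplying any proof at all, so your argument fills that gap rather than diverging from one. The one point that genuinely needs care --- that the degree-$6$ limit lattice is the triangular lattice coordinatized as $\mathbb{Z}^2$ with neighbor vectors $\pm e_1,\pm e_2,\pm(e_1+e_2)$, which is what produces the $-2\cos(x+y)$ term in the paper's formula (and is consistent with its stated range $[0,9]$ and critical values $\{0,8,9\}$) --- you identify and handle correctly.
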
 

\begin{figure}[!htpb] 
\scalebox{0.75}{\includegraphics{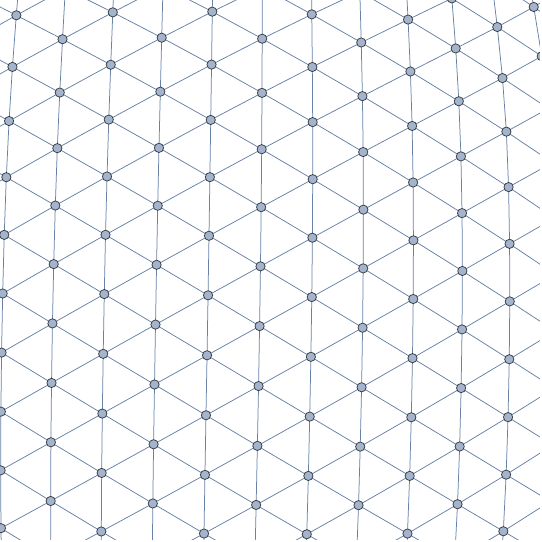}}
\scalebox{0.75}{\includegraphics{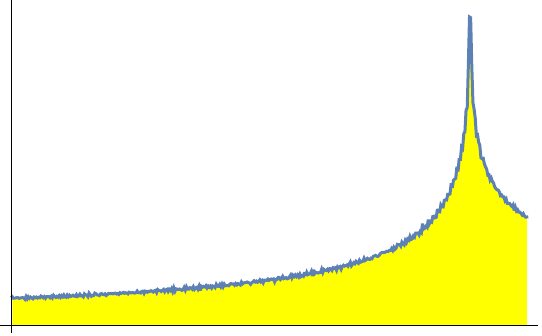}}
\label{Barycentric}
\caption{
The {\bf infinite hexagonal lattice} $\Lambda$ can be seen as the soft Barycentric limit in 
two dimensions. Its Laplacian is a bounded linear operator on $l^2(\Lambda)$ 
equivalent to the multiplication operator with 
$f=6 - 2 \cos(x) - 2 \cos(y) - 2 \cos(x + y)$ on $L^2(\mathbb{R}^2/\Lambda)$. 
The function $f$ has range $[0,9]$ and has critical points at the three points $\{0,8,9 \}$. 
The hyperbolic critical points of the function $f$ lead to the {\bf Van Hove singularity} 
in the density of states of the hexagonal lattice. Van Hove \cite{VanHove1953} already linked 
in 1953 such singularities with the Morse theory of $f$. 
}
\end{figure}

\paragraph{}
The universal limiting measure in the 2-dimensional soft Barycentric limit 
case has compact support like the 1-dimensional standard Barycentric limit.  
We see experimentally that the universal Barycentric measure $\mu_q$ in dimension $q$ 
has affinities with the spectral type of the universal soft Barycentric measure 
$\nu_{d+1}$ in dimension $q+1$. This holds in dimension $q=1$,
where Fourier theory allows to describe both measures. We can define $\mu \leq \nu$ 
on measures on the real line, if there exists $f \in L^1$ and a homeomorphism 
$g$ such that $\mu(g(x)) = f(x) \nu(x)$. Then define $\mu \sim \nu$ if $\mu \leq \nu$
and $\nu \leq \mu$. This is an equivalence relation. It honors the 
Lebesgue decomposition. If one of them has an (ac) component, the other has, if one 
of them has a pure point (pp) component, the other has. It follows from the Lebesgue decomposition 
theorem that if one of them has a (sc)-component, then the other has.

\paragraph{}
Let us look now at the chromatic number of 2-spheres.
For example, by the $4$-color theorem, any $2$-sphere $G$ has chromatic number
$3$ or $4$ and the chromatic number $3$ appears if and only if the sphere is
{\bf Eulerian}, meaning that all vertex degrees are even.
The vertex arboricity of a graph being $1$ is equivalent to the graph being a tree.
The chromatic number of a circular graph is either $2$ or $3$ and the vertex arboricity
of a circular graph is always equal to $2$.
For any 2-sphere $G$, the chromatic number of $G$ and the chromatic number of the
soft Barycentric refinement $G'$ is the same.
Proof: By the 4-color theorem, the chromatic number of the refinement $G'$
is again 3 or 4. If $G$ is Eulerian, then $G'$ is Eulerian because
the vertex degrees only can add a possible vertex degree 6. So, if
$c(G)=3$, then $G$ is Eulerian and so $G'$ is Eulerian so $c(G')=3$.

\paragraph{}
In dimension $q=2$, we can explicitly color the soft Barycentric
refinement $G'=(V',E',F')$, if a coloring of the 2-sphere $G=(V,E,F)$ is known:
we have $V'=V \cup F$ and $E'=\{ (v,f), f \in F, v \in V, v \subset f \}
\cup \{ (f,g), f,g \in F, f \cap g \in E \}$. The 4-color theorem assures that
we can color with 3 or 4 colors. 
If $c: V \to \{0,1,2,3\}$ is a coloring, then define $c'(v') = 3$ for all $v' \in V$
then start assigning $c(v')$ to one of the $v' \in V' \cap F$. 
(i) Lets first assume that the chromatic number of $G$ is $4$.
Now if and other $w' \in V' \cap F$ is adjacent, there are two possibilities:
either $v'$ and $w'$ as faces carry the same color triples in the coloring of $G$,
then define $c'(w')=c'(v')+1 \; {\rm mod} \; 3$
otherwise $c'(w') = c'(v') -1 \; {\rm mod} \; 3$. We have colored $G'$ with $4$ colors. 
(ii) If the chromatic number is $3$, the graph is Eulerian. 
Color the vertices of $G'$ with the color
$0$. If color one of the faces with $c(v')=1$.
If $v'$ and $w'$ are adjacent faces,  let $c'(w') = -c'(w') \; {\rm, mod} \; 3$.
Because the graph is Eulerian, this will color all faces without conflict. We have
colored $G'$ with $3$ colors. 

\begin{figure}[!htpb] 
\scalebox{0.25}{\includegraphics{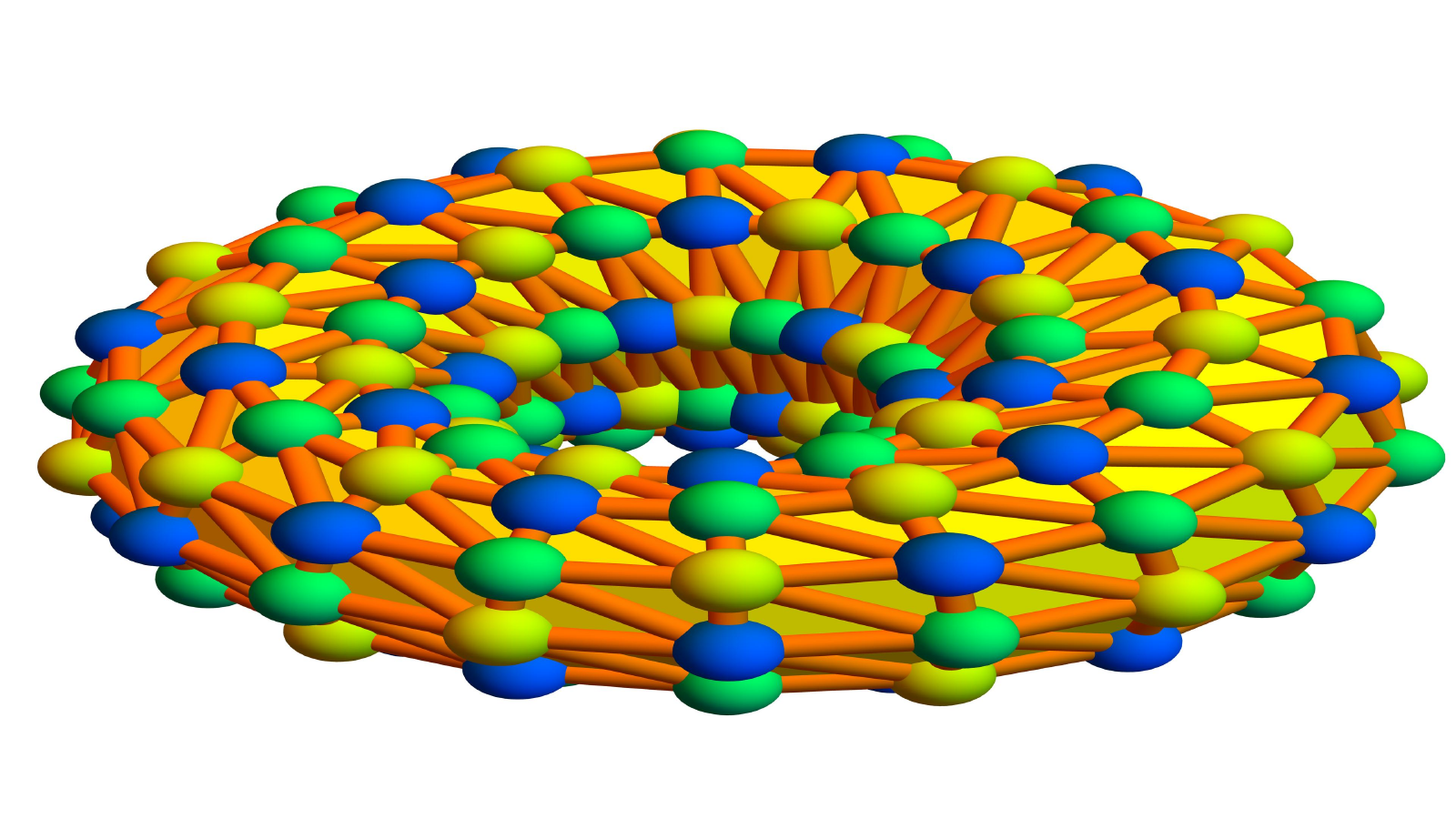}}
\label{Toruscoloring}
\caption{
A coloring of a flat Clifford torus. Clifford tori are left
invariant under soft Barycentric refinements. 
}
\end{figure}

\bibliographystyle{plain}

\end{document}